\newtheorem{thm}{Theorem}
\newtheorem{prop}{Proposition}
\newtheorem{lemma}{Lemma}
\newtheorem{cor}{Corollary}
\theoremstyle{remark}
\newtheorem{ex}{Example}
\newtheorem{rem}{Remark}
\newcommand{\al}{\alpha}
\title{Billiards with singular invariant curves}
\author{Stefano Baranzini}
\email{stefano.baranzini@unito.it}
\thanks{The author is partially supported by the 2025 INDAM-GNAMPA project "Non integrabilità e complessità in Meccanica Celeste" and by the 2022 PRIN project "Stability in Hamiltonian Mechanics and Beyond"}
\address{University of Turin, Department of Mathematics, via Carlo Alberto 10, Turin, Italy}
\keywords{Billiard, Invariant curve, Twist map, Hyperbolic fixed point}
\subjclass[2020]{
	37C83, 
	37J45, 
	37J55. 
}
\date{}
\begin{document}
	\begin{abstract}
		We investigate the regularity of invariant curves of rotation number $1/2$ for a special class of symplectic twist maps of the annulus, billiard maps. We construct strictly convex smooth tables close to the circle having singular (i.e. not $C^1$) invariant curves. Our method relies on a modification of the classical string construction and allows precise control over the location of singularities: they form a discrete set whose closure can contain virtually any closed subset of $\mathbb{S}^1$. Each singularity corresponds to a hyperbolic $2$-periodic trajectory and the invariant curves admit distinct one-sided derivatives at these points.  An analogous construction yields perturbations of constant-width tables with invariant curves of rotation number $1/2$.
	\end{abstract}
	\maketitle
    \section{Introduction}
    Twist maps of the annulus were introduced as the local generic prototype of a symplectic map on a surface near an elliptic fixed point and as such, they have been thoroughly studied. In particular, finding invariant sets and understanding their regularity is a problem of the utmost importance. For instance, homotopically non-trivial invariant curves (also called \emph{essential}) bound regions invariant for the dynamics. 
    
    A celebrated theorem of Birkhoff (see \cite{birkhoff_invariant}) states that essential invariant curves of a $C^1$ symplectic twist map $f$ of the annulus are Lipschitz continuous.
    Simple examples show that, in general, invariant curves cannot be expected to be as regular as the map $f$. It is rather straightforward to produce twist maps of the annulus having a hyperbolic periodic point with coincident stable/unstable manifolds, giving an invariant curve that is not differentiable (see \cite{arnaud_three_results} and Example \ref{example:periodic_points} below).
    
    The situation for non-rational rotation numbers is much more delicate. It is a problem posed by J. Mather (see \cite{notes_cetraro}) to find non-differentiable curves containing no periodic points for $C^r$ twist maps of the annulus. For $r=1$ and $2$ an affirmative answer to this question are given in \cite{arnaud_non_diff_curve,arnaud_C2,avila_fayad_2022}. 
    
    In the present paper, we focus on a very special class of twist maps, \emph{billiard maps}. We propose a way to produce singular (i.e. not $C^1$) invariant curves of rotation number $\frac{1}{2}$ for tables close to the circle. This is achieved using a modification of the classical string construction introduced in \cite{birkhoff_kepler}. The essential curves we build are smooth outside a discrete set $S$ whose closure can contain essentially any closed subset of the boundary, modulo the symmetry imposed by the periodicity (see Theorem \ref{thm:main}). Moreover, each singular point $s \in S$ corresponds to a hyperbolic 2-periodic trajectory, and the invariant curves admit right and left derivatives there.
    
    The structure of the paper is the following. In Section \ref{sec:example} we discuss and slightly modify the example in \cite{arnaud_three_results}. In Section \ref{sec:main_result} we discuss the main results of the paper. Sections \ref{sec:periodic} and \ref{sec:perturbation_circle} are devoted to the proof of Theorem \ref{thm:periodic} and Theorem \ref{thm:main} respectively.

    \subsection{A motivating example}
    \label{sec:example}  
    The first point we briefly discuss is a modification of the example in \cite{arnaud_three_results}. Instead of considering a map $f$ with a single periodic hyperbolic point with coincident stable/unstable manifolds, we add several of those points having a chain of heteroclinic connections. This introduces some ambiguity in the definition of \emph{the} invariant essential curve since there are actually \emph{many} distinct ones, indeed, no smooth branch of the invariant curve covers the whole base under the action of $f$. With a slight abuse of language we will nevertheless speak of \emph{the} invariant curve to refer to the particular one we will consider. 
    
    The observation we make is that the set of singular points of an invariant essential curve is rather simple, it is just made up of isolated points.  Globally, however, this does not have be the case and singular points can accumulate on virtually any closed subset of the domain. This is precisely the same behaviour observed in Theorem \ref{thm:main} for the $\frac12$-invariant curve for billiards.
     
    \begin{ex}
    	\label{example:periodic_points}
    	Let $a,b \in \mathbb{Z}^*$ and consider a function $V:\mathbb R \to \mathbb R$ which is $1-$periodic. We consider the time 1 map given by the Hamiltonian 
    	\[
    	H(p,x,t) = \frac{1}{2}p^2+V(bx-at).
    	\]
    	It is straightforward to check that the change of coordinates 
    	\[
    	X = bx-at, \quad P = bp-a
    	\]
    	transforms the flow given by $H$ into an autonomous one, given by the Hamiltonian
    	\[
    	K(P,X) =  \frac{1}{2}P^2+b^2V(X).
    	\]
    	Thus, any point in the set $\{s:V'(s) = 0\}$ corresponds to a fixed point for the flow generated by $K$ and to a $\frac{b}{a}-$periodic point for the time 1 map of $H$. Moreover, if any such point $s_0$ is a non-degenerate maximum for $V$, then $s_0$ is hyperbolic. Thus, we can choose the portion of the stable/unstable manifolds of $s_0$ having positive momentum. This gives a non-differentiable invariant curve for $K$, and thus for $H$, at any hyperbolic point. However, if $V''(s) =0$ instead, the invariant curve is $C^1$ as a simple computation using the explicit parametrization given by the conservation of $K$ shows.
    	
    	We can now take a closed set $C \subset [0,1]$ having empty interior. Its complement can be written as a countable union of open intervals $I_n = (a_n,b_n)$. We can assume that $a_n\ne0$ and that $b_n\ne 1$ since the cases $(0,c)$ and $(c',1)$ can be treated similarly by considering $(c',c+1)$. For any such interval we can choose a monotone sequence $x_k^n$ such that $x_k^n \to a_n$ as $k\to -\infty$ and $x_k^n \to b_n$ as $k\to \infty$. We can find a function $f_n$ on $[0,1]$ which is smooth and whose support is $[a_n,b_n]$. Moreover we require that $f_n(x_k^n) = 0$, $f_n'(x_k^n)>0$ and $\int_{x_k^n}^{x_{k+1}^n}f_n(s) ds =0$ for all $k$. Define then $f(s) = \sum_{n \in \mathbb{N}} f_n(s)$, extend it by periodicity and set  $V(s) = \int_0^s f(s) ds$. Clearly, all $x_n^k$ are non-degenerate maxima, $V(x_n^k)$ is the same for all $n$ and $k$ and they  accumulate on $C$. 
    \end{ex}
    
    \subsection{Main result}
    \label{sec:main_result}  
    We consider a compact, strictly convex  set $\Omega \subset \mathbb{R}^2$ with smooth boundary and the cylinder $\mathcal{C}$ defined as
    \[
    \mathcal{C} = \{ (p,v_p) : p \in \partial \Omega, \text{ and } v_p \text{ is an inward pointing vector at } p\}.
    \]
    The billiard map $T: \mathcal C \to \mathcal C$ sends a pair  $(p,v_p)$ to $(p',v'_{p'})$ if $p'\ne p$ belongs to the intersection of $\partial \Omega$ and the line through $p$ with direction $v_p$. The new vector $v'_{p'}$ is then the reflection of $v_p$ about the tangent line to $\partial \Omega$ at $p'$.    
    It turns out that $T$ is an area preserving twist map of the annulus (see \cite[Theorem 3.1]{tabachnikov_billiards}). 
    
    We call $\Omega$ a \emph{constant width-body} if the corresponding billiard map $T$ has an invariant curve of rotation number $\frac12$, foliated by periodic points. In other words, $\Omega$ is of constant width if all lines orthogonal to one point of the boundary are orthogonal to the other intersection point as well.
    
    \begin{rem}
    	 In general, billiard maps have \emph{many} invariant curves near the boundary of $\mathcal{C}$. This follows from KAM theory and the results in \cite{lazutkin}. It should be noted that all these curves are not just Lipschitz but smooth.
    \end{rem}
   
    Throughout the paper we parametrize $\partial \Omega$ by its tangent line. We fix a parametrization $\gamma$ of the form
    \[
    \gamma(t) = \gamma(0) +\int_0^t \rho(s) e^{is}ds, 
    \]
    where $\rho(s)$ denotes the curvature radius of $\gamma$.
    This prompts us to identify $2-$periodic orbits with the direction to which they are orthogonal to.  It is thus convenient to consider the projective line $\mathbb{P}^1$ and its natural quotient map $\Pi: \mathbb{S}^1 \to \mathbb{P}^1$.     
    In all our statements, we will fix a closed subset $\tilde C\subseteq\mathbb{P}^1$ and consider its pre-image $C = \Pi^{-1}(\tilde{C})$  in $\mathbb{S}^1$. The vectors in  $\{ie^{is}:s \in C\}$  (i.e.  the direction in $i \tilde C$) will represent the directions spanning a $2-$periodic trajectory.
    
    We can now state the main results of the paper. 
    \begin{thm}
    	\label{thm:periodic}
    	Let $\Omega$ be a constant width-body. 
    	For any closed set $\tilde{C} \subset \mathbb{P}^1$ there exists smooth $1-$parameter families of smooth, strictly convex tables $\Omega_\tau$ such that
    	\begin{itemize}
    		\item $\Omega_0 = \Omega$,
    		\item $\Omega_\tau$ has two $T$-invariant curves $\delta_\tau$ and $\delta^-_\tau$ of rotation number $\frac12$,
    		\item $\Omega_\tau$ has periodic trajectories exactly in the directions given by $i \,\tilde{C}$.
    	\end{itemize}
    \end{thm}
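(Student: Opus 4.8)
The plan is to work in the curvature-radius parametrization and to reduce the three conclusions to statements about the width function. Writing $\gamma_\rho(t)=\gamma(0)+\int_0^t\rho(s)e^{is}\,ds$ with $\rho>0$, the boundary is closed iff the first Fourier modes satisfy $\hat\rho_{\pm1}=0$, and strictly convex iff $\rho>0$. A chord is a $2$-periodic billiard orbit iff it is orthogonal to $\partial\Omega$ at both endpoints; in a strictly convex table the only pairs with antiparallel tangents are $\gamma(t),\gamma(t+\pi)$, so this holds iff the tangential component
\[
F(t):=\mathrm{Re}\big(e^{-it}(\gamma(t+\pi)-\gamma(t))\big)=\int_0^\pi\rho(t+u)\cos u\,du
\]
vanishes. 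An integration by parts gives $F=-G'$ with $G(t)=\int_0^\pi\rho(t+u)\sin u\,du$ the width in the direction $ie^{it}$; thus $2$-periodic orbits are exactly the critical points of $G$, and $\Omega$ has constant width precisely when $G\equiv w$, i.e.\ $F_0\equiv0$ and every direction is periodic.

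First I would construct the family. As $F$ is linear in $\rho$, setting $\rho_\tau=\rho_0+\tau g$ yields $F_\tau=\tau F[g]$ exactly, so for every $\tau\ne0$ the periodic directions equal the zero set of $F[g]$. A direct Fourier computation gives $F[g]=\sum_{k\ \text{even},\,k\ne0}\hat g_k\,i\frac{2k}{k^2-1}e^{ikt}$: only the even modes of $g$ contribute, the closing constraint removes exactly the modes $k=\pm1$ that would otherwise appear, and $\hat g_{2m}\mapsto\widehat{F[g]}_{2m}$ is a bijection with weights of size $\sim 1/m$. Hence $F[g]$ may be prescribed to equal any smooth, $\pi$-periodic, mean-zero function $\Phi$, the smoothness of $\Phi$ guaranteeing that of $g$. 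It remains to build such a $\Phi$ vanishing exactly on $C=\Pi^{-1}(\tilde C)$, which is done as in Example \ref{example:periodic_points}: on each complementary arc one places a smooth bump of controlled $C^k$-norm vanishing to infinite order at the endpoints, and chooses signs and masses so that $\int\Phi=0$ while the zero set is precisely $C$. For $|\tau|$ small one has $\rho_\tau>0$, and $\hat g_{\pm1}=0$ keeps the curve closed, so $\Omega_\tau$ is a smooth strictly convex family with $\Omega_0=\Omega$ whose periodic orbits lie exactly in the directions $i\tilde C$.

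Next I would determine the stability of the surviving orbits and exploit reversibility. A computation of the linearised billiard along the double normal in direction $t_*$ gives $\mathrm{tr}\,DT^2-2=\frac{4\ell}{\rho_1\rho_2}(\ell-\rho_1-\rho_2)$ with $\ell=G(t_*)$ and $\rho_1,\rho_2=\rho(t_*),\rho(t_*+\pi)$, while $\ell-\rho_1-\rho_2=-G''(t_*)$; thus the maxima of $G$ give hyperbolic orbits and its minima elliptic ones. The billiard is reversible: the involution $\iota(t,\theta)=(t,\pi-\theta)$ satisfies $\iota T\iota=T^{-1}$ and fixes the line $\{\theta=\pi/2\}$ of $2$-periodic states. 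It therefore suffices to build one essential invariant graph $\delta_\tau\subset\{\theta\ge\pi/2\}$; then $\delta_\tau^-:=\iota(\delta_\tau)\subset\{\theta\le\pi/2\}$ is a second invariant curve, distinct from $\delta_\tau$ whenever $\tilde C\ne\mathbb{P}^1$.

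Finally, to build $\delta_\tau$ I would follow the heteroclinic picture of Example \ref{example:periodic_points}. The action-minimizing $2$-periodic orbits are the longest double normals, i.e.\ the maxima of $G$, hence hyperbolic; between consecutive ones Aubry--Mather theory provides minimizing connecting orbits, which reversibility organises into an upper and a lower branch. Selecting the upper branch over each gap and adjoining the hyperbolic orbits produces a $T$-invariant set that is a Lipschitz essential graph of rotation number $1/2$ (each iterate advances the base point by roughly $\pi$), touching $\{\theta=\pi/2\}$ exactly at the hyperbolic orbits and passing over the intervening elliptic ones. I expect the main obstacle to be precisely this dynamical step: proving that the connecting branches exist, stay on one side of $\{\theta=\pi/2\}$, and glue into a globally continuous graph, with uniform control near the accumulation points of $\tilde C$---where the orbits degenerate to parabolic and the minimizing-connection analysis is most delicate---rather than dissolving into a Birkhoff instability zone carrying only cantori. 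Realising an arbitrary closed $\tilde C$ as the exact critical set of a smooth $G$ (the summable-bump construction of Example \ref{example:periodic_points}) is the remaining technical point.
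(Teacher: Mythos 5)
Your proposal breaks down at the one step that actually carries the theorem: the existence of the two invariant curves. You perturb the curvature radius of the table itself, $\rho_\tau=\rho_0+\tau g$, prescribe the zero set of the linear functional $F[g]$ (equivalently the critical set of the width), and then hope to assemble an essential invariant graph of rotation number $\tfrac12$ from the surviving hyperbolic $2$-periodic orbits together with Aubry--Mather connecting orbits. But for a twist map the union of the minimizing $2$-periodic orbits with their minimizing heteroclinics is an essential graph only in very degenerate situations: generically the stable and unstable branches cross transversally, the projection of the minimal set of rotation number $\tfrac12$ has gaps, and one is left with a Birkhoff instability region rather than an invariant curve. The paper's own remark (citing \cite{ramirez_ros}) records that a generic perturbation of the circle destroys the rotation-number-$\tfrac12$ curve, and your constraint on $g$ --- only the zero set of $F[g]$ is prescribed --- does not place $\Omega_\tau$ in the infinite-codimension family of tables for which the curve survives. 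You flag this yourself as ``the main obstacle''; it is not a technical point to be checked later but the actual content of the theorem, and your construction provides no mechanism to overcome it.

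The paper sidesteps the dynamical problem entirely by building the table, not a perturbation of its radius of curvature. One takes two constant-width bodies $\Omega_1=\Omega$ and $\Omega_2$ with curvature radii $\rho$ and $\rho+\tau f$, and defines $\partial\Omega_\tau$ as a level set of $d(\Omega_1,\cdot)+d(\Omega_2,\cdot)$ (the string construction of \cite{birkhoff_kepler}). The reflection property of this level set makes each pencil of normal lines to $\Omega_i$ invariant under $T^2$, producing two smooth $T^2$-invariant graphs $\delta_\pm^\tau$ whose pointwise maximum and minimum are the $T$-invariant curves $\delta_\tau$ and $\delta_\tau^-$; the $2$-periodic orbits are exactly the directions in which the two normals coincide, governed by $\langle\gamma_1(t)-\gamma_2(t),e^{it}\rangle=0$. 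Your Fourier analysis (the weights $2k/(k^2-1)$, the bump construction realizing an arbitrary closed zero set compatible with the $t\mapsto t+\pi$ symmetry) is essentially the computation the paper performs, but it must be applied to the difference of the two inner bodies rather than to the table itself, because only then do the invariant curves come for free. Your stability computation for the double normals and the reversibility argument producing the second curve are sound but do not repair the missing existence step.
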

     
     We will denote the set of singular points of $\delta_\tau$ (and of $\delta_\tau^-$) by $S_\tau$. Points in $S_\tau$ are expected to correspond to hyperbolic $2-$periodic trajectories (compare with Remark \ref{rem:singularities} and Corollary \ref{cor:hyperbolicity}). We will check the hyperbolicity condition for perturbations of the circle, yielding the following result.
	\begin{thm}
	 	\label{thm:main}
	 	For any non-empty closed set $\tilde{C} \subset \mathbb{P}^1$ with empty interior there exists smooth $1-$parameter families of smooth, strictly convex tables $\Omega_\tau$ as in Theorem \ref{thm:periodic} such that additionally
	 	\begin{itemize}
	 		\item $\Omega_0$ is a circle,
	 		\item $S_\tau$ is a discrete set, 
	 		\item the closure $\overline{S_\tau}\supset \Pi^{-1}(\tilde{C})$.
	 	\end{itemize}	    
	\end{thm}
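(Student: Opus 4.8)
The plan is to deduce Theorem \ref{thm:main} from Theorem \ref{thm:periodic} by specializing to the circle and by verifying that the prescribed $2$-periodic orbits are hyperbolic. Since a circle is a constant-width body, Theorem \ref{thm:periodic} applies verbatim with $\Omega_0$ the circle, and the new content is to arrange the periodic directions so that the hyperbolic (hence, by Remark \ref{rem:singularities}, singular) $2$-periodic orbits form a \emph{discrete} set $S$ accumulating on $C=\Pi^{-1}(\tilde C)$, while the remaining periodic orbits are degenerate and contribute only $C^1$ points to $\delta_\tau$ and $\delta^-_\tau$. Note that a perfect nowhere-dense $C$ (e.g.\ a Cantor set) admits no relatively discrete dense subset: if $s\in S\subseteq C$ had a clopen $C$-neighbourhood $U$ with $U\cap S=\{s\}$, then $U\setminus\{s\}$ would be a nonempty open subset of the perfect set $C$ missed by $S$, contradicting density. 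Hence $S$ cannot be contained in $C$, and the periodic set must be slightly larger than $C$; granting the construction, $S_\tau=S$ is discrete and $\overline{S_\tau}=S\cup C\supseteq C$, which are the two new bullets.

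First I would carry out the point-set construction, transcribing Example \ref{example:periodic_points} to the present setting. As $\tilde C$ has empty interior, $C$ is closed and nowhere dense in $\mathbb{S}^1$ and invariant under $s\mapsto s+\pi$; its complement is a $\pi$-symmetric union of open arcs $I_n=(a_n,b_n)$. Inside each $I_n$ I choose a bi-infinite sequence $x_k^n$ with $x_k^n\to a_n$ as $k\to-\infty$ and $x_k^n\to b_n$ as $k\to+\infty$, and I prescribe the curvature-radius perturbation through a profile function built exactly as the potential $V$ of Example \ref{example:periodic_points}: supported on $\overline{I_n}$, vanishing to infinite order at the endpoints, with the $x_k^n$ as non-degenerate maxima and with the integral normalisations required for consistency of the string construction of Theorem \ref{thm:periodic}. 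Setting $S=\{x_k^n\}$ (completed $\pi$-symmetrically) and $\hat C=\overline S=S\cup C$, I feed the closed set $\Pi(\hat C)\supseteq\tilde C$ into Theorem \ref{thm:periodic}. By construction $S$ is discrete, its closure is $\hat C\supseteq C$, and every direction of $C$ lies in the infinite-order vanishing locus of the profile, so is a degenerate periodic direction.

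The crux is the hyperbolicity check, i.e.\ Corollary \ref{cor:hyperbolicity}. Here I would compute the linearisation of $T^2$ at a $2$-periodic orbit of a curvature perturbation $\rho_\tau(s)=R+\tau\,\eta(s)+o(\tau)$ of the circle. In the standard Jacobi/perpendicular-chord coordinates the derivative at a perpendicular collision is an explicit function of the chord length $\ell$ and of the radius of curvature at the landing point; for the unperturbed circle all diameters give $\mathrm{tr}\,d(T^2)=2$, the parabolic value reflecting constant width. Expanding to first order in $\tau$, the deviation of the trace from $2$ reduces to a sign condition on the second derivative of the profile at the periodic direction, the exact analogue of the dichotomy of Example \ref{example:periodic_points}, where $V''(s_0)\neq 0$ forces hyperbolicity and $V''(s_0)=0$ yields a $C^1$ point. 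Since the $x_k^n$ are non-degenerate maxima of the profile while $C$ sits in its vanishing locus, this gives hyperbolicity precisely on $S$ and degeneracy on $C$, whence $S_\tau=S$.

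I expect the hyperbolicity computation to be the main obstacle: one must control the linearised map of $T^2$ uniformly enough to extract the leading-order sign along the whole discrete family $S$, and simultaneously ensure that the accumulation set $C$ produces no further singularities, so that $S_\tau$ stays discrete rather than acquiring limit points as singular points. Once the stability criterion is pinned down and matched to the non-degeneracy of the extrema, the remaining verifications, namely smoothness and strict convexity of $\Omega_\tau$ and the existence of the two invariant curves $\delta_\tau,\delta^-_\tau$ of rotation number $\tfrac12$, are inherited directly from Theorem \ref{thm:periodic}.
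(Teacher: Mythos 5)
Your point-set step is exactly the paper's: the complement of $C=\Pi^{-1}(\tilde C)$ is written as a union of arcs, a bi-infinite monotone sequence accumulating at the endpoints is inserted into each arc, and Theorem \ref{thm:periodic} (via Proposition \ref{prop:non_degenerate_int}) is applied to the enlarged closed set, whose isolated points become the discrete singular set with closure containing $C=\partial C$. The observation that a perfect $C$ forces $S_\tau$ to live outside $C$ is correct and consistent with this. The gaps are in the analytic part.

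First, you prescribe the \emph{curvature-radius perturbation} itself as a compactly supported profile whose zeros/critical points sit at the desired directions. But the $2$-periodic directions are not the critical points of $\rho_2-\rho_1$: they are the zeros of $g(t)=\langle c+\int_0^t f(s)e^{is}ds,\,e^{it}\rangle$, and the admissible perturbations $f$ are constrained to the Fourier subspace $V$ (odd frequencies only, no first harmonic), so they cannot be bump functions supported in the gaps of $C$. The paper's computation \eqref{eq:find_zero_fourier} is precisely the dictionary that lets one prescribe $g$ (with the symmetry $g(t)=-g(t+\pi)$, vanishing exactly on the enlarged set, with $\dot g\neq 0$ at the isolated points) and then recover an admissible $f$; this translation is missing from your argument, and without it the "profile'' does not control where the periodic trajectories are. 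Relatedly, no integral normalisation à la Example \ref{example:periodic_points} is needed here: the string construction supplies the two global invariant curves $\delta_\pm$ for $T^2$ automatically.

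Second, your logical route "hyperbolic $\Rightarrow$ singular'' inverts the paper's and leaves a hole. The singularity of $\delta_\tau=\max(\delta_+,\delta_-)$ at a periodic point is equivalent to the two smooth branches $\delta_\pm$ crossing \emph{transversally} there; the paper establishes this directly (Lemma \ref{lemm:derivatives_invariant_curve}: the one-sided slopes are $\mp(\,|\Gamma|^{-1}-k)$, nonzero iff $\ddot s\neq 0$ iff $\ddot h\neq 0$), and then deduces hyperbolicity in Corollary \ref{cor:hyperbolicity} from the existence of two independent real eigendirections plus an \emph{exact} trace computation. Hyperbolicity alone does not imply that the max of the two branches is non-$C^1$ (both branches could a priori be tangent to the same eigendirection), so Remark \ref{rem:singularities} cannot be invoked in the direction you use it. Moreover, your proposed first-order-in-$\tau$ expansion of $\mathrm{tr}\,dT^2$ about the parabolic value $2$ carries a sign ambiguity (one sign would suggest ellipticity, which must then be excluded by other means); the paper avoids this by the closed-form identity $k_1k_2d-(k_1+k_2)=4d\,\ddot h/\bigl((d^2-h^2)(d^2-(h+2\ddot h)^2)\bigr)$, valid for any small perturbation. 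These two points — the Fourier dictionary between $f$ and $g$, and the transversality computation for $\delta_\pm$ — are the actual content of the proof and need to be supplied.
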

    
     \begin{rem}
     	All examples appearing in Theorem \ref{thm:main} are non-generic in the sense that generic perturbations of the circle will break the invariant curve of rotation number $\frac{1}{2}$ (compare with \cite[Theorem 1]{ramirez_ros}). Nevertheless, the families built in Theorem \ref{thm:periodic} and \ref{thm:main} are infinite dimensional, as it happens for tables admitting a rational caustics of a specific rotation number (see \cite{persistence_rational_caustics}). As it will be clear from the proofs in Section \ref{sec:periodic}  and \ref{sec:perturbation_circle},  they are essentially infinitesimally identified by the datum of a function vanishing on the set $\Pi^{-1}(\tilde{C})$.
     \end{rem}
     \begin{rem}
     	\label{rem:singularities}
     	It should be stressed that the construction giving the invariant curves in Example \ref{example:periodic_points} and in Theorem \ref{thm:main} cannot lead to any other type of singularity other than having different right and left derivative, on a discrete set. Indeed, the proof of Theorem \ref{thm:main} shows the existence of two smooth invariant curves for the square of the billiard map (compare with \ref{lemm:derivatives_invariant_curve} and Section \ref{sec:perturbation_circle}). Singular points correspond to transversal intersections of these two curves which necessarily are isolated. 
     \end{rem}

    \section{Proof of Theorem \ref{thm:periodic}}
    \label{sec:periodic}
    In this section we set up a variation of the classical string construction. We follow the lines of \cite[Section 4]{knill} and \cite[Section 4.1]{birkhoff_kepler}. Let us consider two smooth constant width bodies $\Omega_1$ and $\Omega_2$ whose boundary is parametrized by two curves $\gamma_1$ and $\gamma_2$. Denoting the curvature radii $\rho_1$ and $\rho_2$ respectively, we have the following parametrization of $\partial \Omega_1$ and $\partial \Omega_2$
	\begin{equation*}
		\gamma_j(t) = \gamma_j(0)+ \int_0^t \rho_j(s)e^{is} ds, \text{ for }j=1,2. 
	\end{equation*}
    Recall that (see also \cite[Proposition 4.1]{knill}) the functions $\rho_j>0$ can be represented in Fourier series as
    \[
       \rho_j(t) = \sum_{k \in \mathbb{Z}} a_k^j e^{i k t}
    \] 
    and the coefficients $a^j_k$ satisfy $a^j_0 \ne 0$, $a^j_{-1}=0$, $\overline{a^j_k}= a^j_{-k}$ and $a^j_{2k}=0$ for $j=1,2$. 
    To any constant width body, we can associate a set of coordinates in $\mathbb{R}^2\setminus \Omega_j$ writing
    \[
    (r_j,t) \mapsto \gamma_j(t)+r_jie^{it}, 
    \]
    where $r_j$ represent the distance from $\Omega_j$. 
    We now perform the same \emph{string construction} as in \cite[Section 4.1]{birkhoff_kepler} and consider, for $\ell>0$, the tables defined by the equation
    \begin{equation}
    \label{eq:level_set_distance}
        d(\Omega_1,x)+d(\Omega_2,x) = \ell .
    \end{equation}
    Any smooth level set of the two function above defines a table having an invariant curve of rotation number $1/2$. Moreover, each pencil of lines orthogonal to $\Omega_i$ gives rise to an invariant curve for the square of the billiard map which we denote by $\delta_\pm$.    
    These, in general, \emph{are not} composed by periodic points, but of a combination of periodic points and their heteroclinic connections.
    Periodic points are characterized by the fact that the two lines orthogonal to $\Omega_1$ and $\Omega_2$ respectively  and passing through a given point $x$ coincide. Using the parametrization above, this implies that for some $t\in [0,2 \pi]$
    \begin{equation}
    	\label{eq:find_zero}
    \gamma_1(t)-\gamma_2(t) = (r_2-r_1)i e^{it}.
    \end{equation}
    This mean that periodic points correspond to solutions of the equation
    \[
          \langle \gamma_1(t)-\gamma_2(t), e^{it} \rangle =0.
    \]
    We wish now to study the possible set of solutions to this equation. To simplify notation, it is convenient to introduce the following quantities
    \[
    c = (c_1,c_2) = \gamma_1(0)-\gamma_2(0), \quad f(t) = \rho_1(t)-\rho_2(t) = \sum_{k \in \mathbb{Z}} \al_k e^{i kt}
    \]
    where now $f$ leaves in the closed subspace  \[
    V = \{\al_1 =0,\al_{2k} = 0,\al_{k}=\overline{\al_{-k}} \text{ for }k\ge1\}.\]
    Set $\al_k = a_k+ib_k$, plugging in the Fourier series, we can rewrite equation \eqref{eq:find_zero} in the following way
    \begin{equation}
    	\label{eq:find_zero_fourier}
    	\begin{aligned}
      	 \langle c + \int_0^t f(s)e^{i s}ds, e^{i t}\rangle &=\langle c + \int_0^t \sum_{k \in \mathbb{Z}} \al_k e^{i (k+1)s}ds, e^{i t}\rangle\\
      	 &= \langle c,e^{it}\rangle +\sum_{k\in \mathbb{Z}} \langle\frac{\al_k(e^{i(k+1)t}-1)}{i(k+1)}, e^{it} \rangle  \\
      	 & = \langle c+i\sum_{k\in \mathbb{Z}}\frac{\alpha_k}{k+1} ,e^{it}\rangle +\sum_{k\in \mathbb{Z}} \langle\frac{(b_k-ia_k)e^{i(k+1)t}}{k+1}, e^{it} \rangle  \\
      	 & = \gamma_1 \cos(t)+\gamma_2 \sin(t)  +\sum_{k>0} \frac{2k}{k^2-1} \left(a_k\sin(kt)+b_k\cos(kt)\right).
    	\end{aligned}
    \end{equation}
    where $\gamma_1 = c_1-\sum_{k\in \mathbb{Z}}\frac{b_k}{k+1}$
 and $\gamma_2 = c_2+\sum_{k\in \mathbb{Z}}\frac{a_k}{k+1}$. Since $\al_{2m} =0$ for all $m$, the function above has an expansion in odd frequencies alone.  Any function $g$ admitting such an expansion has the following symmetry
 \begin{equation}
 	\label{eq:symmetry_zero _set}
   g(t) = -g(t+\pi),\quad \forall t \in[0,2\pi].
 \end{equation}

\begin{proof}[Proof of Theorem \ref{thm:periodic}]
	To prove the statement, we need to show that for any set $\tilde{C}\subset \mathbb{P}^1$ we can build a function vanishing on $\Pi^{-1}(\tilde{C})$ and satisfying the symmetry condition in equation \eqref{eq:symmetry_zero _set}.  Let $c\in \Pi^{-1}(\tilde{C}) $ and set $C' = \Pi^{-1}(\tilde{C}) \cap [c,c+\pi]$. We can pick any smooth function with derivatives of all order vanishing at $c$ and $c+\pi$ which is positive on $[c,c+\pi]\setminus C'$ and then extended it to $\mathbb{S}^1$ using \eqref{eq:symmetry_zero _set}.
	
	Once this is settled, the statement follows from the construction outlined above. Let $\Omega$ a constant-width body with curvature radius $\rho$. Pick $\rho_1 = \rho$ and $\rho_2 = \rho +\tau f$ where $f\in V$ as above. Thanks to equation \eqref{eq:find_zero_fourier}, the Fourier coefficients of $f$ can be directly recovered from the ones of 
	\[
	g(t) = \langle c + \int_0^t f(s)e^{i s}ds, e^{i t}\rangle
	\] 
	and viceversa. Moreover, if $g$ is $C^\infty$ so is $f$. Clearly, for $\tau = 0$ we recover the original body $\Omega$. Recall that $\Omega_\tau$ has two invariant curve for the square of the billiard map denoted by
	\[
	\delta^\tau_\pm(t) = (t,\lambda_{\pm}^\tau(t)).
	\]	
	We define the invariant curve $\delta_\tau$  as
	\[
	\delta_\tau(t) = (t,\lambda_\tau(t)), \quad \lambda_\tau(t) = \max\{\lambda_{+}^\tau(t),\lambda_{-}^\tau(t)\}
	\]
	and $\delta_\tau^-$ as the minimum. They are invariant thanks to same argument of \cite[Lemma 4.6]{birkhoff_kepler}. The last point follows by construction.

\end{proof}

\section{Proof of Theorem \ref{thm:periodic}}
\label{sec:perturbation_circle}
In this section we consider perturbations of the circle and try to analyse which conditions guarantee that the invariant curves in Theorem \ref{thm:periodic} intersect transversally, giving rise to a invariant curve for $T$ with singular points.  Without loss of generality, we assume that $\gamma_1 (t) = -i e^{i t}$  is a circle and \[\gamma_2(t) = \gamma(t) = \gamma(0)+\int_0^t  \rho(s) e^{is} ds\]
 is a constant width-body close to a circle.

We follow again the exposition in \cite[Section 4.1]{birkhoff_kepler} and of Section \ref{sec:periodic}. We have an explicit parametrization $\Gamma$ of $ \partial \Omega$ which is the level set of the function in \eqref{eq:level_set_distance}. In this particular case, it is given by (see \cite[Eq. (21)]{birkhoff_kepler})
\[
\Gamma(t) = \gamma(t) - \frac{i}{2}\frac{\ell^2- \vert \gamma(t) \vert^2}{\ell- \langle \gamma(t), i e^{i t}\rangle }e^{i t}, \quad t \in [0,2 \pi].
\]
Let us observe that, by construction, the quantity
\begin{equation}
	\label{eq:def_s}
s(t) = \frac{1}{2}\frac{\ell^2- \vert \gamma(t) \vert^2}{\ell- \langle \gamma(t), i e^{i t}\rangle }
\end{equation}
coincides with $\ell-\vert \Gamma(t)\vert$ and determines the distance of $\Gamma(t)$ from the origin. 
As already mentioned, the construction of Section \ref{sec:periodic} proves the existence of invariant curves for the \emph{square} of the billiard map as well, which we can describe rather explicitly in our particular case. The respective trajectories belong either to the pencil of line through the origin (the circle caustic) or their reflections about the tangent line to the boundary. Moreover, recall that  periodic trajectories occur whenever a line through the origin meets $\partial \Omega$ orthogonally and correspond to critical points of the distance from the origin. We denote by $\delta_\pm$ the invariant curves for the square of the billiard map. We have an explicit parametrization for them  (see \cite[Lemma 4.7]{birkhoff_kepler}) as 
\[
\delta_{\pm}(t) = \left( t, \arccos\left(\pm \frac{\langle \Gamma(t),\dot{\Gamma}(t)\rangle}{\vert \Gamma(t)\vert \vert \dot{\Gamma}(t)\vert} \right)\right).
\]
As the next Lemma shows, the derivatives of the invariant curves at a crossing point are completely determine by the derivatives of $s(t)$. To compute them we consider for a moment a re-parametrization of $\Gamma$ using the arc-length parameter $\xi$.  We have the following Lemma.
\begin{lemma}
	\label{lemm:derivatives_invariant_curve}
	Let $\Gamma$ be parametrized using arc-length and $\xi_0$ be a critical point of the distance. The derivatives of the curves $\delta_{\pm}$ read
	\begin{equation*}
		\delta'_{\pm}(\xi_0) =\left(1, \mp \left( \frac{1}{\vert \Gamma(\xi_0)\vert}-k(\xi_0)\right)\right), \quad \frac{d^2}{d \xi^2}\vert \Gamma(\xi)\vert \Big\vert_{\xi=\xi_0}   = \frac{1}{\vert \Gamma(\xi_0)\vert}-k(\xi_0).
	\end{equation*}
\end{lemma}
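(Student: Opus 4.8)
The plan is to exploit the arc-length parametrization to reduce both components of $\delta_\pm$ to explicit functions of the radial distance $r(\xi) = \vert \Gamma(\xi)\vert$ and then differentiate. First I would record that, since $\xi$ is the arc-length parameter, $\vert \dot{\Gamma}\vert \equiv 1$, and that differentiating $r^2 = \langle \Gamma,\Gamma\rangle$ gives $\dot r = \langle \Gamma,\dot\Gamma\rangle / r$. Consequently the argument of the $\arccos$ appearing in the parametrization of $\delta_\pm$ simplifies dramatically: $\frac{\langle \Gamma,\dot\Gamma\rangle}{\vert\Gamma\vert\,\vert\dot\Gamma\vert} = \dot r$, so the second component of $\delta_\pm$ is simply $\theta_\pm(\xi) = \arccos(\pm \dot r(\xi))$. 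In particular, the defining property of a critical point of the distance is exactly $\dot r(\xi_0)=0$, which confirms $\theta_\pm(\xi_0) = \pi/2$ (the orthogonality of the $2$-periodic orbit).

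Next I would differentiate the second component. Since $\theta_\pm = \arccos(\pm \dot r)$, the chain rule yields $\theta_\pm' = \mp \ddot r / \sqrt{1-\dot r^2}$. Evaluating at $\xi_0$, where $\dot r(\xi_0)=0$, the denominator becomes $1$ and we obtain $\theta_\pm'(\xi_0) = \mp \ddot r(\xi_0)$, hence $\delta_\pm'(\xi_0) = (1, \mp \ddot r(\xi_0))$. This already reduces both displayed identities in the Lemma to the single computation of $\ddot r(\xi_0)$, since the claimed value of the second derivative of the distance is precisely what must equal $\frac{1}{\vert\Gamma(\xi_0)\vert} - k(\xi_0)$.

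Finally I would compute $\ddot r$ by differentiating $r\dot r = \langle \Gamma,\dot\Gamma\rangle$ once more, giving $(\dot r)^2 + r\ddot r = \vert \dot\Gamma\vert^2 + \langle \Gamma,\ddot\Gamma\rangle = 1 + \langle \Gamma,\ddot\Gamma\rangle$. At the critical point $\dot r$ vanishes, so
\[
\ddot r(\xi_0) = \frac{1 + \langle \Gamma(\xi_0),\ddot\Gamma(\xi_0)\rangle}{\vert\Gamma(\xi_0)\vert}.
\]
Here I invoke the Frenet relation $\ddot\Gamma = k N$ for the unit normal $N$, together with the geometric observation that at a critical point of the distance $\Gamma(\xi_0)$ is orthogonal to $\dot\Gamma(\xi_0)$, hence parallel to $N$; for the caustic centered at the origin this forces $N = -\Gamma/\vert\Gamma\vert$ and thus $\langle \Gamma,\ddot\Gamma\rangle = -k\vert\Gamma\vert$. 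Substituting gives $\ddot r(\xi_0) = \frac{1}{\vert\Gamma(\xi_0)\vert} - k(\xi_0)$, which closes the argument.

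I expect the only delicate point to be the sign and orientation bookkeeping in the last step: one must fix the convention for the signed curvature $k$ and the normal $N$ so that, at the critical point, the inner product $\langle \Gamma,\ddot\Gamma\rangle$ carries the correct sign $-k\vert\Gamma\vert$ rather than $+k\vert\Gamma\vert$. This amounts to checking that for the convex table, with the origin as the center of the circle caustic, the curvature vector and the position vector point in opposite directions along the radial line through $\Gamma(\xi_0)$; everything else is an elementary differentiation.
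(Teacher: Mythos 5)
Your proof is correct. Note that the paper states Lemma \ref{lemm:derivatives_invariant_curve} without any proof (it is left as a computation, with the parametrization of $\delta_\pm$ cited from the reference), so there is no argument of the paper to compare yours against; your computation is the natural verification and fills this gap. The three steps are all sound: in arc-length $\vert\dot\Gamma\vert\equiv 1$ gives $\frac{\langle\Gamma,\dot\Gamma\rangle}{\vert\Gamma\vert\,\vert\dot\Gamma\vert}=\dot r$ with $r=\vert\Gamma\vert$, so $\theta_\pm=\arccos(\pm\dot r)$ and the chain rule at a critical point ($\dot r(\xi_0)=0$, so the $\arccos$ is differentiable there) yields $\theta_\pm'(\xi_0)=\mp\ddot r(\xi_0)$; differentiating $r\dot r=\langle\Gamma,\dot\Gamma\rangle$ once more gives $r\ddot r=1+\langle\Gamma,\ddot\Gamma\rangle-\dot r^2$; and the Frenet substitution reduces everything to the sign of $\langle\Gamma,\ddot\Gamma\rangle$ at $\xi_0$. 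The sign step you flag as the delicate point is indeed the crux, and your resolution is right, though it deserves one explicit sentence: strict convexity of $\Omega$ forces the curvature vector $\ddot\Gamma(\xi_0)$ to point into the table, and since at a critical point of the distance the origin (an interior point of $\Omega$, being the center of the circle caustic) lies on the normal line through $\Gamma(\xi_0)$, the inward direction along that line is $-\Gamma(\xi_0)/\vert\Gamma(\xi_0)\vert$; hence $\ddot\Gamma(\xi_0)=-k(\xi_0)\Gamma(\xi_0)/\vert\Gamma(\xi_0)\vert$ and $\langle\Gamma(\xi_0),\ddot\Gamma(\xi_0)\rangle=-k(\xi_0)\vert\Gamma(\xi_0)\vert$, which gives both displayed identities at once.
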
 

We now write $\gamma$ as a perturbation of the circle and rewrite the function $s(t)$ in terms of this perturbation. Set $\rho(t) = 1 + f(t)$.  We have
\begin{align*}
\gamma(t) &= \gamma(0)+ \int_0^{t} (1 +  f(s))e^{is} ds \\&= \gamma(0)-i(e^{it}-1)+ \int_0^t f(s)e^{is}ds
\\& = -i e^{it}+\left(c+\int_0^t f(s)e^{is}ds\right)\\&= -ie^{it} +  p(t).
\end{align*}
where we chose $\gamma(0) = c -i $. Plugging in this expression in \eqref{eq:def_s} we find
\[
s(t) =  \frac{1}{2}\frac{\ell^2-1 +2 \langle  i e^{i t},p(t) \rangle - \vert p(t) \vert^2}{\ell+1-  \langle p(t), i e^{i t}\rangle}.
\] 
Let us consider the functions $g(t) = \langle p(t),e^{it}\rangle$ and $h(t) = \langle p(t), i e^{it}\rangle$. Since $\dot{p}(t) = f(t)e^{it}$ we have
\[
\dot{h}(t) = \langle \dot{p}(t),ie^{i t}\rangle -\langle p(t),e^{it}\rangle = -g(t).
\]
Observing that $\vert p(t)\vert^2 = h(t)^2+g(t)^2$, we can rewrite the function $s(t)$ solely in terms of $h(t)$. Indeed we have
\begin{equation}
	\label{eq:s_in_terms_of_h}
s(t) = \frac{1}{2}\frac{\ell^2-1 +2 h(t) - h(t)^2-\dot{h}^2(t)}{\ell+1- h(t)}.
\end{equation}

Recall that the set $\left\{\dot h = 0\right\}$ completely determines the periodic trajectories of $\Omega$  (compare with \eqref{eq:find_zero}) and that the sole datum of $g$ completely determines the perturbation $p(t)$ (compare with \eqref{eq:find_zero_fourier})


Theorem \ref{thm:main} is a consequence of the following Proposition. Let $\tilde{C} \subset \mathbb{P}^1$ be a closed set and $C =\Pi^{-1}(\tilde C) \subset \mathbb{S}^1$.
 We write
\[
\tilde C = \tilde{C}_1 \sqcup \tilde{C}_2, \quad C_i = \Pi^{-1}(\tilde{C}_i)
\] 
where $\tilde{C}_1$ is the set of isolated points of $\tilde{C}$ and $\tilde{C}_2$ its complementary. We have the following  
\begin{prop}
	\label{prop:non_degenerate_int}	
	For any closed set  $\tilde{C} \subset \mathbb{P}^1$ there are smooth families of compact strictly convex set $\Omega_\tau$ with smooth boundary having two smooth invariant curves for the square of the billiard map $\delta_\pm$ such that
	\begin{itemize}
		\item $\Omega_0$ is a circle
		\item all points in  $C$ correspond to periodic trajectories 
		\item $\delta_{\pm}$  meet transversally at all points in $C_1$.
	\end{itemize}
\begin{proof}
	
	The existence of families of such $\Omega_\tau$ follows from the construction outlined in Section \eqref{sec:periodic} with $f$ replaced by $\tau f$. 
	
	It remains to check the transversality condition on $\delta_\pm$. 	Thanks to Lemma \ref{lemm:derivatives_invariant_curve} the invariant curves $\delta_{\pm}$ meet transversally if and only if the second derivative of $s(t)$ is non-zero. Note that this, since we are always working at a critical point of $s(t)$, does not depend on the parametrization. 
		
	We can compute the second derivative of $s(t)$ using \eqref{eq:s_in_terms_of_h}. The general expression is slightly cumbersome, however, whenever $\dot h(t)$ is equal to zero i.e when $\dot s (t) =0$, it simplifies to
	\[
	\ddot{s}(t)\vert_{\{h'(t)=0\}} \sim \frac{\ddot{h}(t)}{2}\left( 1-\frac{2 \ddot{h}(t)}{\ell+1-h(t)}\right)
	\]
	If we fix $h(t)$ and the length of the string $\ell$ is big enough, uniformly in the $C^2$ norm of $h$, the second derivative of $s(t)$ is non-zero if and only if $\ddot{h}(t)$ (i.e. $\dot g$) is non-zero.	
    Thus, we have shown that the transversal intersection of $\delta_\pm$ at a critical point being non-zero is a condition on the function $g$ alone.

	It remains to check that for any closed set $C\subseteq \mathbb{S}^1$ we can find a smooth function $g$ having the symmetry in \eqref{eq:symmetry_zero _set}, vanishing on $C$ but with non zero derivative on the points belonging to $C_1$. This can be achieved as follow. 
	Let us consider the complement of $C$, it is the union of countable many intervals $\{I_k\}_{k \in \mathbb
	N}$. We can consider on this  the equivalence relation generated by
	\[
	   I_k \sim I_m \iff \bar{I}_k \cap \bar{I}_m \ne \emptyset.
	\] 
	Any equivalence class $\iota$ correspond to a \emph{maximal} sequence of discrete points of $C$ and the union of the closure of its elements is an interval whose endpoint cannot lie in $C_1$, i.e. it satisfies the following 
	\[
	    I_\iota =\bigcup_{I \in \iota} \bar{I}, \quad  \bar{I}_\iota= [a_\iota,b_\iota] \quad \{a_\iota,b_\iota\}\cap C\subset C_2.
	\]
	
	There are thus two possibilities depending on the cardinality of $C_1\cap I_{\iota}$ and we consider the case  $\vert C_1\cap I_{\iota} \vert =\infty$ since the other is straightforward.
	Let us consider a smooth function $\psi:\mathbb{R}\to \mathbb{R}$ such that $\psi \equiv 0$ on $(-\infty,-1]$ and $\psi \equiv 1$ on $[0,\infty)$. Let us fix $a,b>0$ and consider the function $\psi_a^b(t)$ defined as
	\[
	\psi_a^b(t) =  e^{-\frac{1}{a}-\frac{1}{b}} \psi\left(\frac{t}{a}\right) \psi\left(-\frac{t}{b}\right) \arctan t.
	\]
	Clearly, $\{\psi_a^b= 0\} = (-a,b)^c \cup \{0\}$ and $(\psi_a^b)'(0)\ne 0$.
	Let us pick $I_0\in I_\iota$ and order the elements of $I_\iota$ as $\{I_k\}_{k \in \mathbb
	Z}$ with respect to usual ordering of real numbers. We denote $I_k$ as $[a_k,b_k]$. Note that $b_k = a_{k+1}$ for all $k$. We define
	\[
       \psi_\iota(t) = \sum_{k \in \mathbb{Z}}(-1)^k \psi_{b_{k-1}-a_{k-1}}^{b_{k}-a_k}(t-a_k).
	\]
 	Clearly, $\psi\vert_{I_{\iota}}$ vanishes only on $C_1 \cap I_{\iota}$ and has non-zero derivative there. Moreover $\psi_{\iota}$ is identically zero on $\mathbb{S}^1\setminus I_\iota$. We can then define $\varphi$ 
 	\[
 	 \varphi(t) = \sum_{\iota}
 		\psi_{\iota}(t).
 	\]
 	which has the required properties.
\end{proof}
\end{prop}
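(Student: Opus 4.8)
The plan is to split the argument into a reduction and a construction. For the reduction I would first invoke the string construction of Section \ref{sec:periodic} with $\gamma_1$ the unit circle and $\gamma_2$ the constant-width body of curvature radius $1+\tau f$: equation \eqref{eq:level_set_distance} then furnishes, for $\tau$ small and $\ell$ large, a smooth strictly convex table $\Omega_\tau$ with $\Omega_0$ the circle and two invariant curves $\delta_\pm$ for the squared billiard map. By \eqref{eq:find_zero} the periodic directions are exactly the zeros of $\dot h$, so requiring periodic trajectories at every point of $C$ amounts to forcing the perturbation to vanish on $C$; this is the first place where the set $C$ enters.

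The transversality claim I would then reduce to a scalar non-degeneracy condition. Lemma \ref{lemm:derivatives_invariant_curve} shows that at a critical point of the distance the slopes of $\delta_+$ and $\delta_-$ differ by $2\big(\tfrac{1}{\vert\Gamma\vert}-k\big)=2\tfrac{d^2}{d\xi^2}\vert\Gamma\vert$, so the curves cross transversally if and only if the distance, equivalently $s(t)=\ell-\vert\Gamma\vert$, has nonvanishing second derivative there; being at a critical point makes this parametrization-free, so I may compute in $t$. Differentiating the closed form \eqref{eq:s_in_terms_of_h} and imposing $\dot h=0$ collapses $\ddot s$ to a multiple of $\tfrac{\ddot h}{2}\big(1-\tfrac{2\ddot h}{\ell+1-h}\big)$, and for $\ell$ chosen large uniformly in $\Vert h\Vert_{C^2}$ the parenthetical factor stays near $1$, so the sign of $\ddot s$ is governed by $\ddot h=-\dot g$ alone. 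Hence the whole proposition reduces to producing a single smooth $g$ obeying the symmetry \eqref{eq:symmetry_zero _set}, vanishing on $C$, and with $g'\neq0$ at every point of $C_1$; the Fourier dictionary of \eqref{eq:find_zero_fourier} then recovers $f$, and \eqref{eq:symmetry_zero _set} extends it to all of $\mathbb{S}^1$.

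The construction of $g$ is the crux, and its smoothness is the step I expect to be the main obstacle. The obstruction is structural: a smooth function vanishing on the whole accumulation set $C_2$ must be \emph{flat} there, yet I need a \emph{simple} zero at each isolated point of $C_1$, and the $C_1$-points may pile up arbitrarily fast onto $C_2$. My plan is to decompose the complement $\mathbb{S}^1\setminus C$ into its gaps $\{I_k\}$ and group them by the relation $I_k\sim I_m\iff \bar I_k\cap\bar I_m\neq\emptyset$; each class $\iota$ fuses a maximal chain of adjacent gaps into one interval $[a_\iota,b_\iota]$ whose endpoints lie in $C_2$ and which harbours a single cluster of $C_1$-points. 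On the $k$-th gap of such a class I would graft a rescaled, sign-alternated copy of the model bump $\psi_a^b(t)=e^{-1/a-1/b}\psi(t/a)\psi(-t/b)\arctan t$, supported in $(-a,b)$ with a simple zero at the origin; summing $\psi_\iota=\sum_k(-1)^k\psi_{b_{k-1}-a_{k-1}}^{b_k-a_k}(t-a_k)$ and then over all classes yields $\varphi$. The alternating signs keep the profile compatible with \eqref{eq:symmetry_zero _set}, and the decisive device is the prefactor $e^{-1/a-1/b}$: as the gap widths $a,b\to0$ near an endpoint of $C_2$ it kills every derivative of the corresponding bump faster than the rescaling $1/a^n$ can amplify it, so the one-sided Taylor jets of $\varphi$ approaching any point of $C_2$ all vanish and $\varphi$ glues to the identically-zero function on $C_2$ to infinite order. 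Verifying precisely this domination—that the exponential prefactor beats the polynomial blow-up of all derivatives—is the essential estimate making $\varphi$ genuinely $C^\infty$, and it completes the proof.
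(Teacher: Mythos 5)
Your proposal follows essentially the same route as the paper's own proof: the same reduction via the string construction and Lemma \ref{lemm:derivatives_invariant_curve} to the non-vanishing of $\ddot s$, the same simplification of $\ddot s$ at critical points of $h$, and the same construction of $g$ via the gap-equivalence classes and the alternating sum of the bumps $\psi_a^b$ with the exponential prefactor. The only difference is one of emphasis: you explicitly flag the estimate that $e^{-1/a-1/b}$ dominates the polynomial blow-up of the rescaled derivatives near $C_2$, which the paper dismisses with a ``clearly''; making that estimate explicit would only strengthen the argument.
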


\begin{proof}[Proof of Theorem \ref{thm:main}]
		The proof is an application of Proposition \ref{prop:non_degenerate_int}. Consider the complementary of $C$ and write it as a union of open intervals $\bigcup_{k\in \mathbb{Z}} J_k$. To each interval, remove a  monotone sequence $\{c^k_m\}_{m \in \mathbb{Z}}$, converging to the extrema of $I_k$ as $m \to \pm \infty$. 
		We obtain a new closed set  $C' = \bigcup_{k,m\in \mathbb{Z}}\{c_m^k\}\cup C$ having all the $c_{k,m}$ as isolated points. By assumption, $C =\partial C \ne \emptyset$ and any point in $\partial C$ can be approximated by a suitable subsequence of the $\{c_m^k\}_{m,k \in \mathbb {Z}}$ and the proof is concluded.
	\end{proof}
    
    Next, we show that all points in $C_1$ are hyperbolic. It should be noted that in general they will not be \emph{uniformly hyperbolic} since the eigenvalues of $dT^2$ will tend to $1$ as soon as we approach the region $\left\{\ddot h=0\right\}$.

    \begin{cor}
	\label{cor:hyperbolicity}
	Under the same hypothesis of Proposition \ref{prop:non_degenerate_int}, all points in $C_1$ are hyperbolic periodic points for the associated billiard map.
	\begin{proof}
		Let us observe that the curves $\delta_{\pm}$ are smooth and can be defined locally, near any point in $C_1$ as the zero set of a smooth function $F_\pm$. Since $\delta_\pm$ are invariant with respect to the square of the billiard map $T$  we have  
		\[
		\frac{d}{dt}\Big \vert_{t= t_0} F_{\pm}(T^2(\delta_{\pm}(t))) =\langle\dot\delta^\perp_\pm(t_0), d T^2 \dot\delta_\pm(t_0) \rangle =0.
		\]
		Since we know that $\dot{\delta}^\pm(t_0)$ are two linearly independent vectors, thanks to Lemma \ref{lemm:derivatives_invariant_curve} and Proposition \ref{prop:non_degenerate_int}, $dT^2$ has two eigenvectors. Thus, $d T^2$ can be either degenerate or hyperbolic. 
		
		The explicit form of $d T^2$ can be easily computed using the generating function of the billiard map (see \cite[Lemma 4.8]{birkhoff_kepler}) and has trace equal to $\pm 2$ if and only if one of the following holds
		\[
		k_1k_2 d - (k_1+k_2) =0, \quad d^2 k_1k_2- d(k_1+k_2)-2 =0,
		\]
		where $d$ stands for the chord's length. This equation can be satisfied only by the following values of $d$
		\[
		   \frac{k_1+k_2}{k_1k_2}, \frac{1}{k_1}, \frac{1}{k_2}.
		\]
		The values $1/k_1$ and $1/k_2$ are incompatible with fact that $\Omega$ is close to a circle. For the remaining one we can compute the curvature and $d$ at points $t$ and $t+\pi$ such that $\dot{h}(t) =0$ (recall $h(t) = -h(t+\pi)$). We have $d =1+l$ whereas
		\begin{align*}
		k_1 = \frac{1}{1 + l + h(t)} + \frac{1}{1 + l + h(t) + 2 \ddot{h}(t)}\\
		k_2 = \frac{1}{1 + l - h(t)} + \frac{1}{1 + l - h(t) - 2 \ddot{h}(t)}.
		\end{align*}
		A direct computations shows that
		\[
			k_1k_2 d - (k_1+k_2) = \frac{4 d \, \ddot{h}(t) }{(d^2 - h^2(t)) (d^2 - (h(t) + 2 \ddot{h}(t))^2)}
		\]
		which is non-zero when $\ddot{h}(t) \ne0$ concluding the proof.
	\end{proof}
\end{cor}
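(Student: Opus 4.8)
The plan is to analyse the differential $dT^2$ at a $2$-periodic point $p_0 \in C_1$ and to show it cannot be parabolic, so that hyperbolicity is the only remaining possibility. First I would use that both invariant curves $\delta_{\pm}$ of the square map pass through $p_0$ and write them locally as zero sets of smooth functions $F_{\pm}$. Since $T^2$ preserves each $\delta_{\pm}$ and fixes $p_0$, differentiating $F_{\pm}(T^2(\delta_{\pm}(t)))\equiv 0$ at the parameter $t_0$ with $\delta_{\pm}(t_0)=p_0$ gives
\[
\langle \dot\delta_{\pm}^{\perp}(t_0),\, dT^2\,\dot\delta_{\pm}(t_0)\rangle = 0,
\]
which says that each tangent vector $\dot\delta_{\pm}(t_0)$ is a real eigenvector of $dT^2$. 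By Lemma \ref{lemm:derivatives_invariant_curve} and the transversality established in Proposition \ref{prop:non_degenerate_int}, these two eigenvectors are linearly independent, so $dT^2$ is diagonalisable over $\mathbb{R}$; since the billiard map preserves area, its eigenvalues form a real reciprocal pair $\mu,\mu^{-1}$. Consequently $dT^2$ is hyperbolic unless $\mu=\pm 1$, i.e. unless $\operatorname{tr} dT^2 = \pm 2$.

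It then remains to exclude the borderline case. Here I would invoke the explicit expression for $dT^2$ coming from the generating function of the billiard map (as in \cite[Lemma 4.8]{birkhoff_kepler}), which depends only on the curvatures $k_1,k_2$ at the two collision points and the chord length $d$. Imposing $\operatorname{tr} dT^2 = \pm 2$ produces the two algebraic relations
\[
k_1 k_2 d - (k_1+k_2)=0, \qquad d^2 k_1 k_2 - d(k_1+k_2) - 2 = 0,
\]
whose only solutions are $d \in \{(k_1+k_2)/(k_1k_2),\,1/k_1,\,1/k_2\}$.

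Finally I would eliminate each candidate value of $d$. The chord of a $2$-periodic orbit in a table close to the circle has length $d \approx 1+\ell$, which cannot equal a single curvature radius $1/k_1$ or $1/k_2$, so those two cases are geometrically impossible. For $d=(k_1+k_2)/(k_1k_2)$ I would evaluate $k_1,k_2$ and $d$ from the parametrization $\Gamma$ at the antipodal parameters $t,t+\pi$ with $\dot h(t)=0$, using the symmetry $h(t+\pi)=-h(t)$; substituting into $k_1k_2 d-(k_1+k_2)$ and simplifying should reduce it to a single factor proportional to $\ddot h(t)$, with a nonvanishing denominator controlled by $\ell$, so that it is nonzero exactly where $\ddot h \ne 0$. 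Since Proposition \ref{prop:non_degenerate_int} ensures $\ddot h \ne 0$ at every point of $C_1$, this excludes $\operatorname{tr} dT^2 = \pm 2$ and yields hyperbolicity. I expect the main obstacle to be this last step: carrying the generating-function form of $dT^2$ through to a clean trace and simplifying the curvature expressions enough that the parabolicity condition collapses to the single scalar $\ddot h(t)$.
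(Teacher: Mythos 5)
Your proposal is correct and follows essentially the same route as the paper: tangent vectors of $\delta_{\pm}$ as two independent real eigenvectors of $dT^2$, reduction of the parabolic case to $\operatorname{tr} dT^2=\pm2$ via the generating function, elimination of $d=1/k_1,1/k_2$ by the near-circular geometry, and collapse of the remaining relation to a factor proportional to $\ddot h(t)$, which is nonzero on $C_1$ by Proposition \ref{prop:non_degenerate_int}. The paper carries out the final simplification explicitly with $d=1+\ell$ and the curvature formulas at $t$ and $t+\pi$, exactly as you anticipate.
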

\bibliography{ref}

\begin{thebibliography}{10}

\bibitem{arnaud_three_results}
M.-C. Arnaud.
\newblock Three results on the regularity of the curves that are invariant by
  an exact symplectic twist map.
\newblock {\em Publ. Math. IHES}, 109:1--17, 2009.

\bibitem{arnaud_non_diff_curve}
M.-C. Arnaud.
\newblock A nondifferentiable essential irrational invariant curve for a
  ${C}^1$ symplectic twist map.
\newblock {\em Journal of Modern Dynamics}, 5(3):583--591, 2011.

\bibitem{arnaud_C2}
M.-C. Arnaud.
\newblock Boundaries of instability zones for symplectic twist maps.
\newblock {\em Journal of the Institute of Mathematics of Jussieu},
  13(1):19–41, 2014.

\bibitem{avila_fayad_2022}
A.~Avila and B.~Fayad.
\newblock Non-differentiable irrational curves for ${C}^1$ twist map.
\newblock {\em Ergodic Theory and Dynamical Systems}, 42(2):491–499, 2022.

\bibitem{birkhoff_kepler}
S.~Baranzini, V.~L.~Barutello, I.~De~Blasi, and S.~Terracini.
\newblock On the {B}irkhoff conjecture for {K}epler billiards, 2025.

\bibitem{persistence_rational_caustics}
Y.~Baryshnikov and V.~Zharnitsky.
\newblock Sub-riemannian geometry and periodic orbits in classical billiards.
\newblock {\em Mathematical Research Letters}, 13(4):587--598, 2006.

\bibitem{birkhoff_invariant}
G.~D. Birkhoff.
\newblock Surface transformations and their dynamical application.
\newblock {\em Acta Math.}, 43:1--119, 1920.

\bibitem{notes_cetraro}
H.~Eliasson, S.~Marmi, S.~Kuksin, and J.C. Yoccoz.
\newblock {\em Dynamical Systems and Small Divisors: Lectures given at the
  {C.I.M.E}. Summer School held in {C}etraro {I}taly, June 13-20, 1998}.
\newblock Lecture Notes in Mathematics. Springer Berlin Heidelberg, 2004.

\bibitem{knill}
O.~Knill.
\newblock On nonconvex caustics of convex billiards.
\newblock {\em Elem. Math}, 53:89--106, 1998.

\bibitem{lazutkin}
V.~F. Lazutkin.
\newblock The existence of caustics for a billiard problem in a convex domain.
\newblock {\em Math. USSR-Izv.}, 7:185--214, 1973.

\bibitem{ramirez_ros}
R.~Ramírez-Ros.
\newblock Break-up of resonant invariant curves in billiards and dual billiards
  associated to perturbed circular tables.
\newblock {\em Physica D: Nonlinear Phenomena}, 214(1):78--87, 2006.

\bibitem{tabachnikov_billiards}
S.~Tabachnikov.
\newblock {\em Geometry and Billiards}, volume~30.
\newblock American Mathematical Society, 2005.

\end{thebibliography}
\bibliographystyle{plain}
\end{document}